\newtheorem{theorem}{Theorem}[section]
\newtheorem{lemma}[theorem]{Lemma}
\newtheorem{proposition}[theorem]{Proposition}
\newtheorem{definition}[theorem]{Definition}
\newenvironment{proof}{\noindent\textsc{Proof: }}
{\hspace{\stretch{1}}$\Box$\medskip}
\begin{document}

\title{Topological representation of matroids\\ from diagrams of spaces}

\author{
  Alexander Engstr\"om\footnote{The author is Miller Research Fellow 2009-2012 at UC Berkeley, and gratefully acknowledges support from the Adolph C. and Mary Sprague Miller Institute for Basic Research in Science.}
  \\
Department of Mathematics \\
UC Berkeley \\ 
\texttt{alex@math.berkeley.edu}
}

\date\today

\maketitle

\begin{abstract}
Swartz proved that any matroid can be realized as the intersection lattice of an arrangement of codimension one homotopy spheres on a sphere. This was an unexpected extension from the oriented matroid case, but unfortunately the construction is  not explicit. Anderson later provided an explicit construction, but had to use cell complexes of high dimensions that are homotopy equivalent to lower dimensional spheres.

Using diagrams of spaces we give an explicit construction of arrangements in the right dimensions. Swartz asked if it is possible to arrange spheres of codimension two, and we provide a construction for any codimension. We also show that all matroids, and not only tropical oriented matroids, have a pseudo-tropical representation.

We determine the homotopy type of all the constructed arrangements. 
\end{abstract}

\section{Introduction}

It is a classic result about oriented matroids, that they can be realized as the intersection lattice of an arrangement of pseudo-spheres on a sphere \cite{BLVSWZ}. In a nice paper Swartz \cite{S} proved the unexpected result that \emph{any} matroid can be realized as the intersection lattice of homotopy codimension one spheres on a sphere. We give a new proof of this result, but with an explicit construction using diagrams of spaces. Anderson \cite{A} also gave a new construction, but her representation used spheres of higher dimension than necessary.

Our construction is much more general than only about codimension one spheres on spheres. An $n$-dimensional sphere can be described as the $(n+1)$--fold join of two disjoint vertices. An arrangement of codimension one spheres on a sphere, can be described as an arrangement of $X^{\ast n}$ complexes on  $X^{\ast (n+1)}$, where $X^{\ast n}$ denotes the $n$--fold join of $X$, and $X$ is two disjoint vertices. The first part of the Topological Representation Theorem~\ref{theorem:rep} states that any $X$ can be used to get a matroid representation.

Swartz asked if it is possible represent a matroid with codimension two spheres on a sphere. If we choose $X$ to be a one-sphere, that is achieved. And if $X$ is three disjoint points we get a tropical representation. Other $X$ than this is also discussed later.
The second part of the Topological Representation Theorem~\ref{theorem:rep} states that the union of all $X^{\ast n}$ complexes on  $X^{\ast (n+1)}$ is homotopy equivalent to
\[ \bigvee_{ p \in M\setminus \hat{0} } \left(    X^{ \ast (d-\textrm{rank}(p))}   \ast    \bigvee^{ | \mu(\hat{0},p) | }  S^{\textrm{rank}(p)-2}     \right) \]
if $M$ is the rank $d$ matroid represented.

In Section 2 we describe methods from discrete Morse theory and the theory of diagrams of spaces. These methods are then used in Section 3 to define the arrangements and prove the Topological Representation Theorem. In the last section we give an explicit example.

\section{Tools from combinatorics and topology}

In this part we describe the two main tools used in this paper, discrete Morse theory and diagrams of spaces. For a basic introduction to combinatorial topology, we recommend Bj\"orner's survey \cite{B}. For matroids, and in particular oriented ones, see \cite{BLVSWZ}.

\subsection{Discrete Morse theory}

Discrete Morse theory was introduced by Forman \cite{F} as a method to find cell structures for topological spaces. The goal is to replace a simplicial complex or a regular CW complex by a CW complex of the same homotopy type but with fewer cells. A CW complex is \emph{regular} if all of its cells are homeomorphic to euclidean cells, and that is exactly when the topology of the CW complex is captured by the combinatorial data compiled in its face poset.  The \emph{face poset} $\mathcal{F}(X)$ of a regular CW complex $X$ is a poset with the cells of $X$ as elements and inclusion as order relation.  The \emph{Hasse diagram} of the faces of $X$ is the directed graph with the cells of $X$ as vertex labels and arrows $\sigma \rightarrow \tau$ for every codimension one subcell $\tau$ of $\sigma$. In the context of homological algebra or topology the arrows in diagrams usually encode cofibrations or inclusions, and are turned the other way around. Working with discrete Morse theory one should rather think of the arrows from a cell as describing where the boundary map sends things.

A \emph{matching $M$ on $X$} is a set of arrows in the Hasse diagram of $X$ with no common vertices. The matching $M$ is \emph{acyclic}, and sometimes called a Morse matching, if the Hasse diagram is still acyclic if we swap the directions of all arrows in $M$. The cells of $X$ adjacent to arrows of $M$ are referred to as the cells of $M$, and the cells of $X$ not in $M$ are the \emph{critical cells}. The critical cells in discrete Morse theory has the same function as critical cells in ordinary Morse theory \cite{F,Mo}. Discrete Morse theory was originally formulated with Morse functions \cite{F}, but Chari's matchings \cite{Ch} are more suitable in this paper.

\begin{theorem}[Main theorem of discrete Morse theory]\label{theorem:mainDiscreteMorseTheory}
If $X$ is a regular CW complex with an acyclic matching $M$ (giving at least one critical vertex), then there exists a CW complex $Y$ that is homotopy equivalent to $X$, and the number of $d$-dimensional cells of $Y$ equals the number of $d$-dimensional critical cells of $X$ for every $d$.
\end{theorem}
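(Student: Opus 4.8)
The plan is to proceed by induction along a filtration of $X$ determined by the matching, using the fact that an acyclic matching can be refined to a discrete Morse function in Forman's original sense. Acyclicity is precisely what allows the matched arrows to be oriented consistently: reversing the arrows of $M$ leaves the Hasse diagram acyclic, so it admits a linear extension. I would use such a linear extension to order the cells of $X$ and then build an increasing sequence of subcomplexes $\emptyset = X_0 \subset X_1 \subset \cdots \subset X_N = X$ in which the cells enter either singly (the critical cells) or in matched pairs.

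First I would establish the two local moves. When a matched pair $(\tau,\sigma)$, with $\tau$ a codimension-one face of $\sigma$, is added to the current subcomplex, I would check that $\tau$ is a \emph{free face}: because of the chosen ordering, at the moment $\sigma$ enters, $\tau$ lies in no other cell already present, so adjoining $\tau$ and $\sigma$ is an elementary collapse and hence a deformation retraction. Here regularity of $X$ is essential, since it guarantees that the attaching maps are homeomorphisms onto their images, so that the collapse is a genuine retraction rather than a merely homological equivalence. When a critical cell of dimension $d$ is added, the subcomplex simply acquires one $d$-cell glued along its attaching map, changing the homotopy type in the controlled way of a CW attachment.

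With these two moves in hand, the theorem follows by assembling the filtration: the matched-pair steps contribute nothing to the homotopy type, while each critical $d$-cell contributes exactly one $d$-cell to a CW model $Y$. Homotopy invariance of cell attachment, applied after cellular approximation of the attaching maps, lets me replace the possibly complicated gluings of $X$ by well-defined attachments onto $Y$, producing a CW complex with one $d$-cell for every critical $d$-cell and homotopy equivalent to $X$. The hypothesis that at least one vertex is critical guarantees that $X_1$ is a single $0$-cell rather than empty, seeding the induction and ensuring that $Y$ is a nonempty CW complex.

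The step I expect to be the main obstacle is verifying that the ordering supplied by acyclicity really does make each matched face free at the instant its pair is introduced. This is the heart of the matter: one must show that no other cell containing $\tau$ can precede $\sigma$ in the linear extension, and this is exactly where a directed cycle among matched and unmatched arrows would break the argument. Making this precise amounts to tracking the gradient paths of the matching and checking that acyclicity forbids the configurations that would leave $\tau$ bound to a second coface — essentially a rephrasing of Forman's lemma that acyclic matchings induce valid discrete Morse functions.
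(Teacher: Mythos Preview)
The paper does not prove this theorem at all; it is stated as a foundational result and attributed to Forman~\cite{F} (with the matching formulation due to Chari~\cite{Ch}). Your outline is essentially the standard proof strategy from that literature: linearize via acyclicity, filter $X$ so that cells arrive either as matched pairs or as critical cells, and observe that matched pairs yield elementary collapses while critical cells yield genuine CW attachments.

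One point to be careful about, which your final paragraph touches but does not resolve: the filtration you describe must consist of \emph{subcomplexes}, so when a matched pair $(\tau,\sigma)$ enters, every face of $\sigma$ other than $\tau$ must already be present. A bare linear extension of the modified Hasse diagram does not automatically guarantee this; one typically passes through a discrete Morse function (assigning real values compatible with the matching) and uses its sublevel sets, or else argues more carefully that the linear extension can be chosen so that all proper faces of $\sigma$ except $\tau$ precede the pair. Your sketch conflates ``$\tau$ is free'' with ``no other coface of $\tau$ precedes $\sigma$,'' but freeness also requires the rest of $\partial\sigma$ to already lie in the current stage. This is fixable along the lines you indicate, but it is the actual content of the argument rather than a side issue.
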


A regular CW complex is \emph{collapsible} if there is a matching $M$ on $X$ with only one critical cell, and that is a vertex. Collapsible complexes were studied long before discrete Morse theory, and they are in particular contractible. This elementary lemma is almost a corollary of the main theorem of discrete Morse theory.

\begin{lemma}\label{lemma:elementaryDiscreteMorseTheory}
Let $X$ be a regular CW complex with an acyclic matching $M$ and a critical vertex $v$, such that the cells of $M$ together with $v$ is a subcomplex $Z$ of $X$. Then there is a CW complex $Y$ and a homotopy equivalence $f:X \rightarrow Y$ such that $Z$ is mapped onto a vertex of $Y$.
\end{lemma}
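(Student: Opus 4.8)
The plan is to recognize $Z$ as a collapsible complex and then crush it to a point by passing to the quotient $X/Z$, using the standard fact that quotienting a CW complex by a contractible subcomplex yields a homotopy-equivalent space. This is exactly the sense in which the lemma is ``almost'' a corollary of the main theorem: the main theorem supplies the contractibility of $Z$, and one extra CW-topological argument transports this to the desired map on all of $X$.

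First I would restrict the matching $M$ to $Z$. By hypothesis the cells of $Z$ are precisely the cells matched by $M$ together with the single critical vertex $v$, so every arrow of $M$ has both of its endpoints in $Z$; consequently $M$ is a matching on $Z$ whose only critical cell is $v$. Its acyclicity is inherited for free, since the modified Hasse diagram of $Z$ (with the arrows of $M$ reversed) is the subgraph of the modified Hasse diagram of $X$ induced on the cells of $Z$, and an induced subgraph of an acyclic digraph is acyclic. Thus $Z$ is collapsible, and in particular contractible. This is where Theorem~\ref{theorem:mainDiscreteMorseTheory} enters: applied to $Z$ it produces a homotopy-equivalent CW complex with a single $0$-cell and no cells of positive dimension, i.e.\ a point.

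Next I would set $Y = X/Z$ and let $f \colon X \rightarrow Y$ be the quotient map. Because $Z$ is a subcomplex, $Y$ is again a CW complex, and $f$ sends all of $Z$ onto the single vertex $[Z]$, as required. It then remains only to verify that $f$ is a homotopy equivalence. This follows from the fact that for a CW pair the inclusion $Z \hookrightarrow X$ is a cofibration, so collapsing the contractible subcomplex $Z$ is a homotopy equivalence; concretely one feeds a contraction of $Z$ into the homotopy extension property to obtain a homotopy on $X$ that is compatible with the quotient projection.

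The main obstacle, and the reason for the word ``almost,'' is precisely the passage from the contractibility of $Z$ to a statement about a map defined on all of $X$. One is tempted to realize the collapse of $Z$ directly by performing the elementary collapses inside $X$, but this fails in general: a free face of $Z$ need not be free in $X$, since a codimension-one face lying in $Z$ may also be a boundary face of cells lying outside $Z$. The quotient construction circumvents this entirely, replacing the ambient collapsing sequence by a single cofibration argument, and that cofibration step is the one point that genuinely requires care.
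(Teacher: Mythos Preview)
Your proof is correct and follows exactly the paper's approach: observe that $Z$ is collapsible (hence contractible), set $Y=X/Z$, and use that quotienting a CW complex by a contractible subcomplex is a homotopy equivalence. The paper's proof is the two-line version of what you wrote; your additional justification of collapsibility and of the cofibration step simply unpacks what the paper leaves implicit.
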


\begin{proof}
The subcomplex $Z$ is contractible since it is collapsible. Set $Y=X/Z$ and $Z$ is mapped onto the point $Z/Z$.
\end{proof}

How to find  acyclic matchings? One standard trick is locating a cone $v \ast Y \subset X$ and then $\{ v\ast \tau \rightarrow \tau \mid \tau\in Y \}$ is an acyclic matching on $X$. We want to patch simple acyclic matchings together in a systematic way, and Hersh \cite{He} and Jonsson \cite{J} independently found the right way to do that. First note that there is an obvious generalization of the concepts Hasse diagram, matching, and acyclic matching to other finite posets than face posets.

\begin{lemma}\label{lemma:composeAcyclic}
If $f:P\rightarrow Q$ is a poset map and $M_q$ is an acyclic matching on each $f^{-1}(q) \subset P$, then $M=\cup_{q\in Q} M_q$ is an acyclic matching on $P$.
\end{lemma}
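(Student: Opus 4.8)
The plan is to verify the two defining properties of an acyclic matching separately: that $M$ is a matching on the Hasse diagram of $P$, and that reversing its arrows leaves that diagram acyclic.

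First I would check that $M$ is a matching. Because the fibers $f^{-1}(q)$ are pairwise disjoint and cover $P$, no element of $P$ lies in two different fibers, so the matchings $M_q$ cannot share a vertex; the only thing left to confirm is that each arrow of $M_q$ is genuinely an arrow of the Hasse diagram of $P$. Here the key observation is that a covering relation inside a fiber is already a covering relation in $P$: if $p < p'$ is a covering relation in $f^{-1}(q)$ but $p < r < p'$ in $P$ for some $r$, then $q = f(p) \le f(r) \le f(p') = q$ since $f$ is order-preserving, forcing $f(r) = q$ and hence $r \in f^{-1}(q)$, which contradicts that $p < p'$ is a covering relation within the fiber. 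Thus the arrows of each $M_q$ coincide with Hasse arrows of $P$, and $M$ is a matching.

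The main work is acyclicity, which I would argue by contradiction, assuming a directed cycle in the Hasse diagram of $P$ after the arrows of $M$ are reversed. Such a cycle alternates between Hasse arrows not in $M$, which point from an element to one that it covers and hence strictly decrease the order in $P$, and reversed arrows of $M$, whose two endpoints lie in a common fiber and therefore share the same $f$-value. Applying $f$ and using that it is order-preserving, the value of $f$ never increases as one traverses the cycle. Since the cycle returns to its starting vertex, $f$ must be constant along it, so the entire cycle lies in a single fiber $f^{-1}(q)$. By the first step its arrows are Hasse arrows of $f^{-1}(q)$, so the cycle is a directed cycle in the Hasse diagram of $f^{-1}(q)$ with the arrows of $M_q$ reversed, contradicting the acyclicity of $M_q$.

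I expect the only genuine subtlety to be the squeeze argument identifying covering relations in a fiber with covering relations in $P$, since without it the union $M$ need not even be a legitimate matching on $P$. Once that is in place, the monotonicity of $f$ along a directed cycle makes the acyclicity statement almost immediate.
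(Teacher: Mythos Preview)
The paper does not actually prove this lemma; it is stated with attribution to Hersh and Jonsson, so there is no in-paper argument to compare against. Your proof is the standard one found in those references and is correct.

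Two minor remarks. First, the assertion that a directed cycle must strictly alternate between unmatched down-arrows and reversed matched arrows is not needed for your argument and is not literally true without a grading hypothesis; what you actually use is only that every arrow of the modified Hasse diagram weakly decreases $f$, and that holds arrow by arrow. Second, when you conclude that the cycle is a cycle in the Hasse diagram of $f^{-1}(q)$, you invoke ``the first step,'' but what you need there is the converse direction: a covering relation in $P$ between two elements of the same fiber is a covering relation in that fiber. This is immediate because the fiber carries the induced order, so any intermediate element in the fiber would already be intermediate in $P$.
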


\subsection{Diagrams of spaces}

A diagram of spaces is a functor from a small category into the category of topological spaces. The most comprehensive reference geard towards applictions in combinatorics is \cite{WZZ} following up on \cite{ZZ}. Most of these methods were worked out in the context of category theory \cite{DS,M,V}. We need a quite limited part of the machinery: our small category is represented by finite posets and the topological spaces in the diagram are CW complexes.

\begin{definition}
A \emph{ $P$-diagram of spaces $\mathcal{D}$} is the following data
\begin{itemize}
\item a finite poset $P$,
\item a CW complex $D_p$ for every $p\in P$,
%
%
%
%
\item a continuous map $d_{pq}: D_p \rightarrow D_q$ for every pair $p\geq q$ of $P$,
\end{itemize}
satisfying that $d_{qr}(d_{pq}(x))=d_{pr}(x)$ for every tripple $p\geq q \geq r$ of $P$ and $x\in D_p$.
\end{definition}

For categories without an obvious associated homotopy theory one would realize the diagrams using colimits, but the category of topological spaces is the archetypical category for working with homotopy colimits. The general the definition of the homotopy colimit (Section 2.2, \cite{WZZ}) is more complicated than this, but this is up to homotopy the same in our setting (Proposition 4.1, \cite{WZZ}).

\begin{definition}[Homotopy colimit representation]\label{def:homotopyColimitConstruction}
Let $\mathcal{D}$ be a $P$-diagram. Define $\mathcal{J}$ as the join of all spaces in the diagram realized by embedding them in skew affine subspaces. The points of $\mathcal{J}$ are parametrized as
\[  \left\{ \sum_{p\in P} t_px_p \left|  x_p\in D(p) \textrm{ and } 0\leq t_p\leq 1 \textrm{ for all }p\in P, \textrm{ and } \sum_{p\in P} t_p =1  \right.  \right\}. \]
The \emph{homotopy colimit of $\mathcal{D}$} is
%
%
%
%
\[ \emph{\textrm{hocolim }} \mathcal{D} = \{ t_0x_0+\cdots+ t_mx_m\in \mathcal{J} \mid x_i \in D_{p_{i}}, p_0\leq \cdots \leq p_m, d_{p_{i+1}p_i}(x_{i+1})=x_i  \}. \]
\end{definition}

\begin{lemma}[Homotopy lemma]\label{lemma:homotopyLemma}
Let $\mathcal{D}$ and $\mathcal{E}$ be $P$-diagrams  with homotopy equivalences  $\alpha_p:D_p \rightarrow E_p$ for every $p\in P$ such that the diagram
%
%
%
%
%
%
%
%
%
\[    \begin{array}{ccccc} & & \alpha_p & \\  & D_p & { \rightarrow } & E_p  \\
 d_{pq} & \downarrow  && \downarrow & e_{pq}    \\
 & D_q & { \rightarrow } & E_q  \\  & & \alpha_q & \\ 
  \end{array}   \]
  commutes for all $p>q$. Then $\alpha$ induces a homotopy equivalence
  \[ \hat{\alpha} :  \textrm{\emph{hocolim}} \,\,\mathcal{D} \rightarrow  \textrm{\emph{hocolim}} \,\,\mathcal{E}. \] 
\end{lemma}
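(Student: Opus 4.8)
The plan is to prove homotopy invariance of the homotopy colimit by decomposing $\mathrm{hocolim}\,\mathcal{D}$ into cells indexed by chains of $P$ and then applying the gluing lemma for homotopy equivalences along a skeletal filtration. First I would unpack Definition~\ref{def:homotopyColimitConstruction} geometrically: a chain $c=(p_0<\cdots<p_m)$ in $P$ together with a point $x_m\in D_{p_m}$ determines all the lower coordinates through $x_i=d_{p_m p_i}(x_m)$, so the corresponding piece of $\mathrm{hocolim}\,\mathcal{D}$ is the prism $\Delta^m\times D_{p_m}$, with $(t_0,\dots,t_m)\in\Delta^m$ the barycentric coordinates. (When two consecutive $p_i$ coincide the compatibility condition forces the coordinates to coincide as well, so it suffices to range over strict chains.) On such a prism the induced map $\hat{\alpha}$ acts as $\mathrm{id}_{\Delta^m}\times\alpha_{p_m}$, and the hypothesis that every square commutes is exactly what guarantees $\hat{\alpha}$ is well defined, i.e.\ that it sends configurations compatible for $\mathcal{D}$ to configurations compatible for $\mathcal{E}$.

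Next I would introduce the filtration $F_0\subseteq F_1\subseteq\cdots\subseteq\mathrm{hocolim}\,\mathcal{D}$, where $F_k$ is the union of the prisms coming from chains with at most $k+1$ elements; since $P$ is finite this stabilizes to $\mathrm{hocolim}\,\mathcal{D}$ after finitely many steps, and $\hat{\alpha}$ respects the filtration. The base case is $F_0=\coprod_{p\in P}D_p$ with $\hat{\alpha}|_{F_0}=\coprod_p\alpha_p$, which is a homotopy equivalence because each $\alpha_p$ is. For the inductive step I would exhibit $F_k$ as a pushout of $F_{k-1}$ along the attachment of the top prisms: $F_k$ is obtained from $F_{k-1}$ by gluing $\coprod_{|c|=k+1}\Delta^k\times D_{p_m}$ along a map $\coprod\partial\Delta^k\times D_{p_m}\to F_{k-1}$ assembled from the simplicial face maps. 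Deleting an interior vertex $p_i$ is the inclusion of the face $\{t_i=0\}$ with the $D$-coordinate unchanged, while deleting the top vertex $p_m$ applies $\mathrm{id}\times d_{p_m p_{m-1}}$ and lands in the prism of the truncated chain.

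Then I would apply the gluing lemma. In the cube of pushout data for $\mathcal{D}$ and $\mathcal{E}$, the vertical maps are $\hat{\alpha}|_{F_{k-1}}$, which is a homotopy equivalence by induction, together with $\coprod\mathrm{id}\times\alpha_{p_m}$ on the prisms and its restriction to the boundaries, all homotopy equivalences; the top horizontal map $\coprod\partial\Delta^k\times D_{p_m}\hookrightarrow\coprod\Delta^k\times D_{p_m}$ is a cofibration of CW complexes; and commutativity of the cube holds because the $\alpha_p$ intertwine the maps $d$ and $e$. The gluing lemma then shows that $\hat{\alpha}|_{F_k}$ is a homotopy equivalence, and the induction terminates at the length of a longest chain, proving that $\hat{\alpha}$ is a homotopy equivalence.

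I expect the main obstacle to be the bookkeeping in the inductive step: verifying precisely that $F_k$ is the claimed pushout, and in particular that the attaching maps on the various faces of $\partial\Delta^k\times D_{p_m}$ glue consistently. Here the cocycle condition $d_{qr}(d_{pq}(x))=d_{pr}(x)$ is exactly what makes the simplicial identities hold and the realization well defined. A second, minor point is that the gluing lemma is usually stated for a single pushout square, whereas all top prisms are attached simultaneously; this is handled either by the coproduct form of the gluing lemma or by attaching prisms one chain at a time and composing the resulting homotopy equivalences. Everything else is the formal machinery of realizing a levelwise homotopy equivalence of simplicial spaces, which is well behaved here precisely because all the $D_p$ are CW complexes.
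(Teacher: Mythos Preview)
Your argument is correct and follows the standard route to homotopy invariance of the homotopy colimit: realize $\mathrm{hocolim}\,\mathcal{D}$ as a union of prisms $\Delta^{|c|-1}\times D_{\max c}$ over chains $c$ in $P$, filter by chain length, and climb the filtration using the gluing lemma for cofibrations. The bookkeeping you flag (that the attaching maps satisfy the simplicial identities thanks to the cocycle condition on the $d_{pq}$, and that one may attach all length-$k$ prisms at once or one at a time) is routine and your treatment of it is adequate.

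There is nothing to compare against in the paper itself: the Homotopy Lemma is quoted there as a standard result from the diagrams-of-spaces toolkit, with the reference \cite{WZZ} (Welker--Ziegler--\v{Z}ivaljevi\'c) serving in lieu of a proof. Your argument is essentially the one given in that reference, specialized to posets and CW complexes, so in spirit you have reconstructed exactly the proof the paper is pointing to.
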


In the (combinatorial topologists) perfect world all simplicial complexes would be shellable and bouquets of spheres. Many versions of this wedge lemma have been used in situation almost that good, when you have a diagram with null homotopic maps between spheres and balls.

\begin{lemma}[Wedge lemma]\label{lemma:wedgeLemma}
Let $P$ be a poset with a maximal element $\hat{1}$. Let $\mathcal{D}$ be a $P$-diagram with
%
%
%
%
%
%
%
%
pointed spaces $b_p \in D_p$ for every $p\in P \setminus \hat{1}$ such that $d_{pq}$ is the constant map $x \mapsto b_q$ for every pair $p>q$. Then
\[ \textrm{\emph{hocolim}} \,\,\mathcal{D} \simeq \bigvee_{p\in P} \left( \Delta(P_{<p}) \ast D_p \right) \]
\end{lemma}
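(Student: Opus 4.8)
The plan is to build an explicit acyclic matching on the face poset of $\textrm{hocolim}\,\mathcal{D}$ that collapses away everything except a distinguished subcomplex realizing the claimed wedge, and then to invoke the discrete Morse machinery (Lemma~\ref{lemma:elementaryDiscreteMorseTheory} and Lemma~\ref{lemma:composeAcyclic}) together with the homotopy lemma (Lemma~\ref{lemma:homotopyLemma}). First I would unwind Definition~\ref{def:homotopyColimitConstruction} in the special case at hand: because every map $d_{pq}$ is the constant map onto the basepoint $b_q$, a point of $\textrm{hocolim}\,\mathcal{D}$ supported on a chain $p_0 \le \cdots \le p_m$ must satisfy $x_i = b_{p_i}$ for all $i<m$, so only the top coordinate $x_m \in D_{p_m}$ is free. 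This means $\textrm{hocolim}\,\mathcal{D}$ decomposes, as a set, into pieces indexed by chains of $P$, where the contribution of a chain ending at $p$ is a simplex spanned by the basepoints along the chain joined with a copy of $D_p$. Making this precise is exactly the statement that $\textrm{hocolim}\,\mathcal{D}$ is built from the pieces $\Delta(P_{\le p}) \ast D_p$ glued along the subcomplexes $\Delta(P_{<p}) \ast \{b_p\}$.

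The cleanest route, which I would take, is to reduce to a known prototype via Lemma~\ref{lemma:homotopyLemma}. Replace $\mathcal{D}$ by the simplified diagram $\mathcal{E}$ in which each $E_p$ is a \emph{pointed} space that already carries the structure we want (for instance, keep $D_p$ but make the basepoint inclusions explicit), with all maps still constant onto basepoints; the homotopy equivalences $\alpha_p$ are the identity, so the square commutes trivially and the two homotopy colimits are homotopy equivalent. The real content is then to compute the homotopy colimit of the constant-map diagram directly. Here I would set up the matching: within each fiber of the natural poset map from the face poset of $\textrm{hocolim}\,\mathcal{D}$ down to $P$ (sending a cell to the maximal $p$ in its supporting chain), I match cells using the cone structure supplied by the basepoint $b_p$. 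Concretely, on the piece over $p$, the subcomplex $\Delta(P_{<p}) \ast \{b_p\}$ is a cone with apex $b_p$, and coning gives an acyclic matching pairing each face $\tau$ with $b_p \ast \tau$, leaving the wedge summand $\Delta(P_{<p}) \ast D_p$ (relative to its basepoint) as the critical cells. Lemma~\ref{lemma:composeAcyclic} then glues these fiberwise matchings into a global acyclic matching, and Lemma~\ref{lemma:elementaryDiscreteMorseTheory} contracts the matched subcomplex to a point, producing exactly the one-point union $\bigvee_{p\in P}\left(\Delta(P_{<p})\ast D_p\right)$.

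The step I expect to be the main obstacle is verifying that the fiberwise matchings really do assemble correctly, i.e.\ that the poset map from the cell structure to $P$ is order-preserving and that the matched cells together with the critical vertex genuinely form a \emph{subcomplex} (the hypothesis of Lemma~\ref{lemma:elementaryDiscreteMorseTheory}), rather than merely a set of cells. This requires care at the overlaps: a cell supported on a chain $p_0 \le \cdots \le p_m$ whose top coordinate sits at the basepoint $b_{p_m}$ should be thought of as belonging to a \emph{lower} stratum, so the stratification by "the largest index where the coordinate is non-basepoint" must be checked to be compatible with the face relation. The maximal element $\hat{1}$ plays a role here, since $\Delta(P_{<\hat 1})$ is the full order complex of $P\setminus\hat 1$ and $D_{\hat 1}$ joins onto it without a basepoint collapse, so one should confirm the indexing in the wedge includes the top summand correctly. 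Once the subcomplex condition is established, the homotopy type computation is immediate, and the identification of the glued complex with the stated wedge follows from the definition of the join and the fact that all the collapsed cones share the single apex after Lemma~\ref{lemma:elementaryDiscreteMorseTheory} crushes $Z$ to a point.
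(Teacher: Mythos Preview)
The paper does not prove this lemma; it is quoted without proof as a standard tool from \cite{WZZ} (going back to \cite{ZZ}), so there is no in-paper argument to compare against. Your opening paragraph correctly identifies the key structural fact: with constant maps, only the top coordinate along a chain is free, so $\mathrm{hocolim}\,\mathcal{D}$ is assembled from pieces $\Delta(P_{<p})\ast D_p$ glued along the cones $\Delta(P_{<p})\ast\{b_p\}$.

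The execution, however, has a genuine gap. You define a poset map sending each cell to the maximal $p$ in its supporting chain and then propose to match $\tau\leftrightarrow b_p\ast\tau$ ``within each fiber.'' But a face $\tau$ of $\Delta(P_{<p})$ is a cell whose supporting chain has maximum strictly below $p$, while $b_p\ast\tau$ has supporting chain with maximum $p$; these lie in \emph{different} fibers of your map, so Lemma~\ref{lemma:composeAcyclic} does not apply. Worse, a fixed $\tau$ sits inside $\Delta(P_{<p'})$ for every $p'>\max\tau$, so your rule would assign it several partners $b_{p'}\ast\tau$ simultaneously, which is not a matching at all. If instead you intend to match entirely inside the fiber over $p$, note that this fiber consists of pairs (chain with top $p$, cell of $D_p$), and $b_p$ is merely a vertex of $D_p$, not a cone apex for $D_p$; there is no cone matching available there. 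The detour through Lemma~\ref{lemma:homotopyLemma} with $\alpha_p=\mathrm{id}$ is also vacuous and should be dropped.

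What actually makes the argument go through is a global collapse rather than a fiberwise one: each $\Delta(P_{<p})\ast\{b_p\}$ is a contractible subcomplex (a cone with apex $b_p$), and these cones are precisely the loci along which the pieces $\Delta(P_{<p})\ast D_p$ are attached to one another. Contracting every such cone to its apex leaves the pieces joined only at basepoints, and one more contraction of the resulting basepoint tree yields the wedge. The references carry this out directly from the explicit description in Definition~\ref{def:homotopyColimitConstruction}; if you want a discrete-Morse version, you must choose a different governing poset map (one that separates the cone cells $(\sigma,b_{\max\sigma})$ from the rest) so that Lemma~\ref{lemma:composeAcyclic} genuinely applies.
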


So far we haven't defined any diagrams given CW complexes.
\begin{definition}
Let $X$ be a CW complex with subcomplexes $X_1, X_2, \ldots, X_n$ such that
\begin{itemize}
\item the union of the subcomplexes is $X$,
\item and the intersection of any set of subcomplexes $X_{i_1},\ldots X_{i_m}$ is a subcomplex of $X$,
\end{itemize}
then the \emph{intersection diagram} is a diagram whose elements are the intersections of subcomplexes and the maps are the inclusion morphisms.
\end{definition}
In the previous definition we could have skipped the condition that the intersection of subcomplexes is a subcomplex of $X$ if we had assumed that $X$ was a regular CW complex or a simplicial complex. The intersection diagram is over a poset with a maximal element (often denoted $\hat{1}$) whose space is the intersection of all the subcomplexes.

\begin{lemma}[Projection lemma]\label{lemma:projectionLemma}
Let $\mathcal{D}$ be the intersection diagram for $X$. Then there is a homotopy equivalence
\[  X \simeq  \textrm{\emph{hocolim}} \,\,\mathcal{D}.  \] 
\end{lemma}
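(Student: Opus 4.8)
The plan is to construct an explicit projection $\pi : \textrm{hocolim}\,\mathcal{D} \to X$, identify its fibers as cones, and then upgrade contractible fibers to a genuine homotopy equivalence using that every structure map is an inclusion of subcomplexes. First I would record the crucial simplification coming from the fact that each $d_{pq}$ is an inclusion: in a point $t_0 x_0 + \cdots + t_m x_m$ of the homotopy colimit the compatibility condition $d_{p_{i+1}p_i}(x_{i+1}) = x_i$ degenerates to $x_{i+1} = x_i$ inside $X$, so all the $x_i$ coincide with a single point $x \in X$. I would then define $\pi$ by sending such a point to that common $x$; this map is clearly continuous and surjective because the $D_p$ cover $X$.

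The second step is to compute the fibers. For $x \in X$ put $P_x = \{\, p \in P \mid x \in D_p \,\}$. Since $p' \leq p$ forces $D_p \subseteq D_{p'}$, the set $P_x$ is closed downward, and $\pi^{-1}(x)$ is exactly the geometric realization of the order complex $\Delta(P_x)$, the points being the barycentric combinations supported on chains of $P_x$. The key observation is that $P_x$ has a maximum element, namely the intersection of all the subcomplexes $X_i$ that contain $x$: this intersection is an object of the diagram, it contains $x$, and it lies above every other member of $P_x$. Consequently $\Delta(P_x)$ is a cone with this element as apex, hence contractible, so every fiber of $\pi$ is contractible.

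The last step, and the main obstacle, is to promote fiberwise contractibility to a homotopy equivalence, since contractible fibers alone do not suffice in general. I would exploit the cofibration structure: each $d_{pq}$ is the inclusion of a CW subcomplex and is therefore a cofibration. Filtering $P$ and building $\textrm{hocolim}\,\mathcal{D}$ cell-by-cell shows it is assembled from the $D_p$ by gluing along cofibrations, and the colimit of this diagram is precisely the union $\bigcup_i X_i = X$. For such a diagram of cofibrations the canonical map from the homotopy colimit to the colimit is a homotopy equivalence (this is the general projection principle for cofibrant diagrams, in the spirit of the results quoted from \cite{WZZ}), and that map is exactly $\pi$. Alternatively one can avoid the black box by an induction on the number of subcomplexes: writing $X = (X_1 \cup \cdots \cup X_{n-1}) \cup X_n$ realizes $\textrm{hocolim}\,\mathcal{D}$ as a homotopy pushout of the homotopy colimits of the sub-diagrams on $\{X_1,\ldots,X_{n-1}\}$, on $X_n$, and on their intersection, and the gluing lemma for homotopy equivalences closes the induction. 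Either way the contractible-fiber picture above is what guarantees that no homotopy is lost in the passage from $\textrm{hocolim}\,\mathcal{D}$ to $X$.
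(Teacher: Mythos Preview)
The paper does not actually prove this lemma; it is quoted without proof as one of the standard tools imported from \cite{WZZ}, so there is no argument of the author's to compare against.

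Your proposal is correct and is essentially the standard proof. A couple of remarks. First, your first two steps (defining $\pi$ and computing its fibers as cones $\Delta(P_x)$) are pleasant intuition but logically superfluous once you invoke the third step: the real content is precisely that for a diagram whose structure maps are closed cofibrations (here, inclusions of CW subcomplexes) the canonical map $\textrm{hocolim}\,\mathcal{D}\to\textrm{colim}\,\mathcal{D}$ is a homotopy equivalence, and $\textrm{colim}\,\mathcal{D}=\bigcup_i X_i=X$ is immediate from the covering hypothesis. This is exactly how the Projection Lemma is stated and proved in \cite{WZZ}, and your inductive alternative via iterated homotopy pushouts and the gluing lemma is a legitimate, more hands-on route to the same conclusion. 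Second, if you want the fiber picture to carry independent weight, note that $P_x$ is constant on the interior of each cell of $X$, so $\pi$ is trivial over each open cell; this lets you run a cell-by-cell induction directly and avoids the ``contractible fibers are not enough'' worry. Either way the argument goes through.
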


\section{Topological representations of a matroid}

This is the main part of the paper, and in here we define what topological representation of matroids are, and give an explicit way to find many of them. We also use the tools introduced in the previous section to determine the homotopy type of the spaces representing the matroids.

\begin{definition}
A rank $r$ matroid $M$ is a collection of subsets of a finite set $E$ with a poset structure by inclusion, such that
\begin{itemize}
\item The poset $M$ is graded of rank $r$.
\item The poset $M$ is a geometric lattice:
\begin{itemize}
\item The join $p \vee q$ and meet $p \wedge q$ is defined for all $p,q \in M$.
\item Every element is the join of atoms.
\item Semimodularity: $\textrm{rank}(p)+\textrm{rank}(q) \geq \textrm{rank}(p \wedge q) + \textrm{rank}(p \vee q)$ for  all $p,q \in M$.
\end{itemize}
\item The meet is the intersection: $p \wedge q = p \cap q $ for all $p,q \in M$.
\item The join of all atoms is $E$.
\end{itemize}
\end{definition}

The \emph{flats of a matroid $M$} are all sets
\[ F= \left\{  A \subseteq \textrm{atoms}(M) \left| \bigvee_{a \in A} a \not \geq a' \textrm{ for all } a' \in \textrm{atoms}(M)\setminus A  \right.  \right\}.  \]
With inclusion order, the flats $F$ is a rank $r$ matroid isomorphic to $M$ by:
\[ \begin{array}{ccc}  f: M \rightarrow F  &&  f(p)=\{ a \textrm{ atom of } M \mid a \leq p \}  \\
  g: F \rightarrow M  &&  \displaystyle g(A)= \bigvee_{a \in A} a  \\
\end{array}
\]

\begin{definition}\label{def:rep}
An $X$-\emph{arrangement} is a CW complex $Y$ and a finite set $\mathbf{A}$ of subcomplexes of $Y$ such that:
\begin{itemize}
\item[1.] The complex $Y$ is homotopy equivalent to $X^{\ast d}$ for some $d$, and $\dim (Y) = \dim (X^{\ast d})$.
\item[2.] Each complex $A$ in $\mathbf{A}$ is homotopy equivalent to $X^{\ast (d-1)}$ and $\dim (A) = \dim (X^{\ast (d-1)})$.
\item[3.] Each intersection $B$ of complexes in $\mathbf{A}$ is homotopy equivalent to some  $X^{\ast e}$, and $\dim (B) = \dim (X^{\ast e})$.
\item[4.] If there is a free group action of $\Gamma$ on $X$, then it induces a free $\Gamma$-action on $Y$ and every intersection of complexes in $\mathbf{A}$.
\item[5.] If $B\simeq X^{\ast e}$ is an intersection of complexes in $\mathbf{A}$, the complex $A$ is in $\mathbf{A}$, and 
%
%
%
$A \not \supseteq B$, then
$A\cap B \simeq X^{\ast (e-1)}$.
\end{itemize}
\end{definition}

A subset $\{ A_1, A_2, \ldots A_n\}$ of $\mathbf{A}$ is a \emph{flat in the arrangement} if for every subcomplex $B$ in $\mathbf{A}\setminus \{A_1,A_2,\ldots, A_n\}$
\[ \bigcap_{i=1}^n A_n \not \subseteq B. \]

\begin{proposition}\label{prop:isMatroid}
Let $(Y,\mathbf{A})$ be an $X$-arrangement, then the flats of $\mathbf{A}$ is a matroid.
\end{proposition}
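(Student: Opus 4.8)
The plan is to realize the flats as the closed sets of a matroid closure operator on the ground set $E=\mathbf{A}$, and then to quote the classical correspondence between such closure operators and geometric lattices in order to match the paper's definition of a matroid. First I would fix a rank function. For $S\subseteq\mathbf{A}$ write $B_S=\bigcap_{A\in S}A$. By condition~3 of Definition~\ref{def:rep} we have $B_S\simeq X^{\ast e}$ with $\dim B_S=\dim X^{\ast e}$, and since $\dim X^{\ast e}=e(\dim X+1)-1$ is strictly increasing in $e$ (as $X$ is nonempty), the integer $e=e(S)$ is determined by $B_S$. Set $\mathrm{rank}(S)=d-e(S)$, so that $\mathrm{rank}(\emptyset)=0$ (because $B_\emptyset=Y\simeq X^{\ast d}$) and $\mathrm{rank}(\{A\})=1$ for each $A$ (condition~2).

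Next I would introduce the closure $\overline{S}=\{A\in\mathbf{A}\mid B_S\subseteq A\}$ and note that $\{A_1,\dots,A_n\}$ is a flat in the arrangement exactly when it is closed, i.e.\ $\overline S=S$. The driving observation is that condition~5 is a \emph{unit-increase rule}: for $A\in\mathbf{A}$, if $A\supseteq B_S$ then $B_{S\cup\{A\}}=B_S$ and the rank is unchanged, whereas if $A\not\supseteq B_S$ then $B_{S\cup\{A\}}=A\cap B_S\simeq X^{\ast(e(S)-1)}$ and the rank increases by exactly one; equivalently $\mathrm{rank}(S\cup\{A\})=\mathrm{rank}(S)$ if and only if $A\in\overline S$. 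From the identity $B_{\overline S}=B_S$ (each complex of $\overline S$ contains $B_S$, so re-intersecting recovers $B_S$) the closure axioms $S\subseteq\overline S$, monotonicity, and idempotence of $\overline{\,\cdot\,}$ follow formally.

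The step I expect to be the main obstacle is the exchange property: if $y\notin\overline S$ but $y\in\overline{S\cup\{x\}}$, then $x\in\overline{S\cup\{y\}}$. Here I would replace any naive set manipulation of subcomplexes (which would not survive, since $B_S\cap x\subseteq y$ does not formally give $B_S\cap y\subseteq x$) by rank bookkeeping. Writing $B=B_S$: from $y\notin\overline S$ we get $\mathrm{rank}(S\cup\{y\})=\mathrm{rank}(S)+1$. The hypothesis $y\in\overline{S\cup\{x\}}$ forces $x\notin\overline S$ (otherwise $B\subseteq x$ gives $B=B\cap x\subseteq y$, contradicting $y\notin\overline S$), so $\mathrm{rank}(S\cup\{x\})=\mathrm{rank}(S)+1$, and because $B\cap x\subseteq y$ we have $B_{S\cup\{x,y\}}=B\cap x$, whence $\mathrm{rank}(S\cup\{x,y\})=\mathrm{rank}(S)+1=\mathrm{rank}(S\cup\{y\})$. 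Thus adjoining $x$ to $S\cup\{y\}$ leaves the rank fixed, and the unit-increase characterization yields $x\in\overline{S\cup\{y\}}$, which is the desired conclusion.

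Finally I would invoke the standard fact that the closed sets of a closure operator obeying these four axioms form a geometric lattice under inclusion: meets are intersections (matching ``the meet is the intersection''), the lattice is graded by $\mathrm{rank}$ and semimodular, the atoms are the sets $\overline{\{A\}}$ (pairwise distinct, since $\overline{\{A\}}=\overline{\{A'\}}$ would force $A\subseteq A'\subseteq A$), every flat is the join of the atoms beneath it, and the join of all atoms is $\overline{\mathbf A}=\mathbf A=E$. This verifies each clause of the matroid definition. The only nonroutine input is the exchange property above, whose sole available tool is condition~5; the remainder is bookkeeping with the homotopy types $X^{\ast e}$.
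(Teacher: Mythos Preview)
The paper does not actually prove this proposition: it declares the argument ``tedious but elementary'' and leaves it to the reader, alternatively pointing to Anderson's Proposition~2.3 with $S^{e-1}$ replaced by $X^{\ast e}$. Your proposal is a correct and complete execution of precisely that elementary argument---recognising the flats as the closed sets of a closure operator on $\mathbf{A}$, extracting the unit-increase rule from condition~5, and verifying Steinitz exchange via rank bookkeeping---so it both fills the gap the paper leaves and follows the route the paper implicitly recommends. One small caveat worth recording: your identification of $e(S)$ from $\dim B_S$ needs $X$ nonempty and finite-dimensional, and the step ``$A\cap B_S\simeq X^{\ast(e(S)-1)}$ hence $e(S\cup\{A\})=e(S)-1$'' tacitly uses that $X^{\ast e'}\simeq X^{\ast(e-1)}$ forces $e'=e-1$, which holds whenever $X$ is not contractible (via connectivity) but is vacuous for contractible $X$; this is really an ambiguity in Definition~\ref{def:rep} rather than a flaw in your argument, and does not arise for any of the spaces $X$ the paper cares about.
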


The proof of Proposition~\ref{prop:isMatroid} is tedious but elementary, and is left to the reader. Alternatively replace every "$S^{e-1}$" by "$X^{\ast e}$" in the proof of Proposition 2.3 in \cite{A}.

Swartz \cite{S} found that any matroid can be represented with codimension one spheres on a sphere. In our setting this corresponds to the choice of $X$ being two isolated vertices, and
\[ X^{\ast d} = (\cdot \, \cdot)^{\ast d} = S^{d-1}. \]
This is exactly setup of intersecting a real centralized hyperplane arrangement with a sphere to get an arrangement of spheres. In Swartz' \cite{S} paper it is asked for a more general setup that also would cover the representation of matroids with codimension two spheres on a sphere. This corresponds to complex hyperplane arrangements, and we get these representations by using $X=S^{1}$,
\[ X^{\ast d} = (S^1)^{\ast d} = S^{2d-1}. \]
The uncommon, but still studied, quaternion hyperplane arrangements \cite{EPS} have their corresponding matroid representation with $X=S^3$.

A tropical oriented matroid is an arrangement of tropical hyperplanes with the cells in its complement labeled with three covectors $(1,2,3)$ per hyperplane instead of two covectors $(-,+)$ as for the ordinary oriented matroid \cite{AD}. The intersection of a one dimensional tropical hyperplane and $S^1$ is three points, and $(\cdot \, \cdot \, \cdot)^{\ast d} - (\cdot \, \cdot \, \cdot)^{\ast (d-1)}\ast \emptyset$ is homotopic to three points that could represent the covectors $(1,2,3)$. In some sense $X=(\cdot \, \cdot \, \cdot)$ gives us a tropical representation of matroids. A tropical hyperplane is a codimension one skeleton of a fan with three maximal cells. Using fans with $k$ maximal cells one could define a generalized oriented matroid with $k$ covectors per hyperplane. For $k=2$ this is the ordinary oriented matroid, and for $k=3$ it is the tropical matroid. Setting $X$ to $k$ isolated vertices we get a $k$-fan representation of matroids.

This lemma is used to determine the homotopy type of our arrangements.
 
\begin{lemma}\label{lemma:alexWedgeLemma}
Let $\mathcal{D}$ be a $P$-diagram of regular CW complexes where all maps are inclusion morphisms. Suppose that there are acyclic matchings $M_q$ on every $D_q$ for $q\in P\setminus \hat{1}$ such that
\begin{itemize}
\item for some vertex $b_q$ of $D_q$, the set of cells of $M_q$ together with the vertex $b_q$ is a collapsible subcomplex of $D_q$; 
\item every cell of $D_p\setminus \{b_q\}$ is in $M_q$, if $q$ covers $p$ in $P$;
\end{itemize}
then we have a homotopy equivalence
\[ X  \simeq \bigvee_{ p \in P} \left( \Delta(P_{<p }) \ast D_p \right) \]
\end{lemma}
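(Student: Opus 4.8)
The plan is to deform $\mathcal{D}$ into a new diagram $\mathcal{E}$ whose structure maps are all \emph{constant}, so that the Wedge Lemma (Lemma~\ref{lemma:wedgeLemma}) applies directly, and then to transport the answer back to $\mathcal{D}$ using the Homotopy Lemma (Lemma~\ref{lemma:homotopyLemma}). Since every map of $\mathcal{D}$ is an inclusion morphism, I read $\mathcal{D}$ as the intersection diagram of $X := \bigcup_{p\in P} D_p$, so the Projection Lemma (Lemma~\ref{lemma:projectionLemma}) gives $X \simeq \mathrm{hocolim}\,\mathcal{D}$ and it suffices to compute the homotopy type of $\mathrm{hocolim}\,\mathcal{D}$.

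First I would collapse each space. For $q \in P \setminus \hat{1}$ the first hypothesis says that $Z_q$, the cells of $M_q$ together with the vertex $b_q$, is a collapsible subcomplex of $D_q$. Applying Lemma~\ref{lemma:elementaryDiscreteMorseTheory} to $D_q$ with the acyclic matching $M_q$ and critical vertex $b_q$ produces a CW complex $E_q := D_q / Z_q$ and a homotopy equivalence $\alpha_q : D_q \to E_q$ sending all of $Z_q$ to a single vertex $b_q \in E_q$. For the maximal element I set $E_{\hat{1}} := D_{\hat{1}}$ and $\alpha_{\hat{1}} := \mathrm{id}$.

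The crux is to assemble the $E_p$ into a $P$-diagram $\mathcal{E}$ with constant maps for which the $\alpha_p$ form a natural transformation, and here the second hypothesis is essential. When $p$ covers $q$ (so that $d_{pq}$ is the inclusion $D_p \subseteq D_q$), that hypothesis puts every cell of $D_p$ other than $b_q$ into $M_q$, whence $D_p \subseteq Z_q$; chaining this along cover relations and using $D_p \subseteq D_{p'}$ whenever $p > p'$ shows $D_p \subseteq Z_q$ for \emph{every} $p > q$. Consequently $d_{pq}(D_p) \subseteq Z_q$, so $\alpha_q \circ d_{pq}$ is literally the constant map at $b_q$. I therefore define $e_{pq} : E_p \to E_q$ to be the constant map at $b_q$. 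Because each $e_{pq}$ is constant the composition law $e_{qr}\circ e_{pq} = e_{pr}$ holds trivially (both sides are constant at $b_r$), so $\mathcal{E}$ is a genuine diagram, and the equality $\alpha_q \circ d_{pq} = e_{pq}\circ \alpha_p$ makes every square commute on the nose. The Homotopy Lemma then yields $\mathrm{hocolim}\,\mathcal{D} \simeq \mathrm{hocolim}\,\mathcal{E}$.

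Finally $\mathcal{E}$ satisfies exactly the hypotheses of the Wedge Lemma: pointed spaces $b_p \in E_p$ for $p \in P \setminus \hat{1}$, with all structure maps constant at these basepoints. Hence $\mathrm{hocolim}\,\mathcal{E} \simeq \bigvee_{p\in P}\left(\Delta(P_{<p}) \ast E_p\right)$. Since joining with a fixed complex preserves homotopy equivalence, the equivalences $\alpha_p : D_p \to E_p$ give $\Delta(P_{<p})\ast E_p \simeq \Delta(P_{<p})\ast D_p$ termwise, so the two wedges are homotopy equivalent; combining the three equivalences proves the claim. I expect the only real obstacle to be the middle step: checking that the collapses are compatible enough that the deformed maps are simultaneously constant and strictly natural with respect to the $\alpha_p$. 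Everything there rests on the containment $D_p \subseteq Z_q$, so I would take care to derive it cleanly from the covering hypothesis before invoking the two diagram lemmas.
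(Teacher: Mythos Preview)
Your proposal is correct and follows essentially the same route as the paper: build $\mathcal{E}$ by setting $E_q = D_q/Z_q$, take all structure maps to be constant at the collapsed point, verify the squares commute because $D_p \subseteq Z_q$ for $p>q$, then apply the Homotopy Lemma followed by the Wedge Lemma. The paper's proof is slightly terser---it leaves the containment $D_p \subseteq Z_q$ implicit rather than deriving it by chaining cover relations as you do---and it proves the statement only for $\mathrm{hocolim}\,\mathcal{D}$, deferring the Projection Lemma step (your reading of $X$ as $\bigcup_p D_p$) to the applications; but these are cosmetic differences, not a different argument.
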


\begin{proof}

We construct a $P$-diagram $\mathcal{E}$ whose homotopy colimit is homotopy equivalent with the homotopy colimit of $\mathcal{D}$, but the maps in $\mathcal{E}$ are null homotopic and we can then use the wedge lemma.

For $q< \hat{1}$ define $E_q=D_q/Z_q$ where $Z_q$ is the collapsible subcomplex of $D_p$ whose cells are the cells of $M_q$ together with $b_q$. According to Lemma~\ref{lemma:elementaryDiscreteMorseTheory} this gives a homotopy equivalence $\alpha_q : D_q \rightarrow E_q$ with $\alpha_q(Z_q)=c_q$ for some point $c_q$ of $E_q$. Replace every map $e_{pq}$ in $\mathcal{D}$ by the constant map $e_{pq}(x)=c_q$ in  $\mathcal{E}$. Also set $E_{\hat{1}}=D_{\hat{1}}$ and $\alpha_{\hat{1}}(x)=x$. The data for $\mathcal{E}$ is a $P$-diagram of spaces since the maps are coherent: $e_{qr}(e_{pq}(x))=e_{pr}(x)=c_r$ for all $x$ in $E_p$. The diagram
\[    \begin{array}{ccccc} & & \alpha_p & \\  & D_p & { \rightarrow } & E_p  \\
 d_{pq} & \downarrow  && \downarrow & e_{pq}    \\
 & D_q & { \rightarrow } & E_q  \\  & & \alpha_q & \\ 
  \end{array}   \]
  commutes for all $p>q$ since $\alpha_q(d_{pq}(D_p)) \subseteq \alpha_q( Z_q ) = \{ c_q \}$ and $e_{pq}(\alpha_p(D_p))\subseteq e_{pq}(E_p)= \{ c_q \}$. By the homotopy lemma (Lemma~\ref{lemma:homotopyLemma})
\[ \textrm{hocolim }\mathcal{D} \simeq  \textrm{hocolim }\mathcal{E}, \]
and by the wedge lemma (Lemma~\ref{lemma:wedgeLemma})
\[   \textrm{hocolim }\mathcal{E}  \simeq \bigvee_{ p \in P} \left( \Delta(P_{<p }) \ast E_p \right) \simeq \bigvee_{ p \in P} \left( \Delta(P_{<p }) \ast D_p \right)  \]
where the last homotopy equivalence follows from that $E_p \simeq D_p$ by construction.
%
%

\end{proof}

In the applications of the previous lemma in this paper, the $P$-diagram $\mathcal{D}$ is an intersection diagram and the lemma is used together with the projection lemma. A special case of the previous lemma is when one of the subcomplexes of $X$ used to construct the intersection diagram is $X$. Then $\Delta(P_{<p})$ is a cone and $\Delta(P_{<p }) \ast D_p$ is contractible for all $p \neq \hat{0}$. The only thing left to wedge is $\Delta(P_{< \hat{0} }) \ast D_{\hat{0}}= \emptyset \ast X=X$ and we get $X \simeq X$.

The next lemma explains how to construct many topological spaces with null homotopic maps between them, to be used in the previous lemma.

\begin{lemma}\label{lemma:joinUp}
If $X_1, X_2, \ldots, X_n$ are regular CW complexes then there exists an acyclic matching $M$ on $\mathcal{F}(X_1 \ast X_2 \ast \cdots \ast X_n)$ and a critical vertex $b$ of that matching, such that
\begin{itemize}
\item the cells of $M$ together with $b$ is a collapsible subcomplex $Z$ of $X_1 \ast X_2 \ast \cdots \ast X_n$, and
\item the complex $ X_1 \ast \cdots \ast X_{i-1} \ast \emptyset \ast  X_{i+1} \ast \cdots X_n$ is a subcomplex of $Z$ for all $i=1,2,\ldots,n$.
\end{itemize}
\end{lemma}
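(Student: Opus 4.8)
The plan is to build the matching by induction on the number of factors $n$, working entirely in the face poset. It is convenient to enlarge it to the \emph{augmented} face poset $\overline{\mathcal{F}} = \mathcal{F}(X_1 \ast \cdots \ast X_n) \cup \{\hat{0}\}$, where $\hat{0}$ is the empty cell; its elements are tuples $(\sigma_1,\ldots,\sigma_n)$ in which each $\sigma_i$ is either an honest cell of $X_i$ or the symbol $\emptyset$ recording that $X_i$ contributes nothing, ordered coordinatewise. Call a cell \emph{full} if every $\sigma_i$ is an honest cell of $X_i$, and \emph{thin} otherwise. The thin cells are exactly those lying in some $X_1 \ast \cdots \ast \emptyset \ast \cdots \ast X_n$, so controlling them is exactly what the second bullet of the statement asks for.

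The inductive claim I would prove is: after fixing a vertex $w_i \in X_i$ for every $i$, the poset $\overline{\mathcal{F}}$ carries an acyclic matching in which every thin cell (including $\hat{0}$) is matched and every critical cell is full. For the step I would single out $w_1$ and use the standard cone matching that pairs $(\emptyset,\tau)$ with $(w_1,\tau)$ for each $\tau$ in the augmented face poset of $X_2 \ast \cdots \ast X_n$; this is the acyclic matching of the cone $w_1 \ast (X_2 \ast \cdots \ast X_n)$ quoted earlier in the paper, and it disposes of every cell whose first coordinate is $\emptyset$ or $w_1$, sending $\hat{0}$ up to $w_1$. What remains are the cells with first coordinate $\sigma_1 \in \mathcal{F}(X_1)\setminus\{w_1\}$, and over each such $\sigma_1$ the fiber is a copy of the augmented face poset of $X_2 \ast \cdots \ast X_n$, on which I install the matching supplied by the inductive hypothesis for the $n-1$ factors $X_2,\ldots,X_n$. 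Patching the fiber matchings onto the cone matching through Lemma~\ref{lemma:composeAcyclic}, using the poset map that contracts $\{\emptyset,w_1\}$ to a single bottom element, yields one acyclic matching, and unwinding the recursion identifies the critical cells as precisely the tuples $(\sigma_1,\ldots,\sigma_n)$ with $\sigma_i \in \mathcal{F}(X_i)\setminus\{w_i\}$ for every $i$, all of which are full.

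To reach the stated form I would delete the single matched pair $\{\hat{0},w_1\}$ and put $b=w_1$. Since $\hat{0}$ is the global minimum, removing it leaves every cover relation and every remaining matched arrow among the honest cells intact, so a directed cycle for the restricted matching would already be one for the unrestricted matching; hence the restriction $M$ to $\mathcal{F}(X_1 \ast \cdots \ast X_n)$ stays acyclic, its critical cells being $b$ together with the full critical cells above. By construction the set $Z$ of matched cells together with $b$ is exactly the complement of the full critical cells, and this complement is downward closed because the full critical cells are upward closed: a cover of a full cell whose coordinates all avoid the relevant $w_i$ again has that property, since enlarging a nonempty coordinate produces a cell of positive dimension and so cannot equal the vertex $w_i$. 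Thus $Z$ is a subcomplex, it contains every thin cell and in particular each $X_1 \ast \cdots \ast \emptyset \ast \cdots \ast X_n$, and as $M$ restricts to a matching on $Z$ with the unique critical vertex $b$, the definition of collapsibility (equivalently Lemma~\ref{lemma:elementaryDiscreteMorseTheory}) shows $Z$ is collapsible.

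The step I expect to demand the most care is the bookkeeping that keeps the critical cells simultaneously \emph{full} and \emph{upward closed}: these two properties are exactly what license discarding the leftover cells while still guaranteeing that $Z$ is an honest subcomplex, whereas acyclicity itself comes almost for free once the fibered structure is arranged, being handed over by Lemma~\ref{lemma:composeAcyclic}. Two smaller points to pin down are the base case together with degenerate factors (a factor that is a single vertex contributes no leftover critical cells, so the recursion simply terminates there), and the observation that the empty cells arising in the sub-joins during the recursion correspond to genuine cells of the ambient complex rather than to the ambient empty cell $\hat{0}$, so that only the one top-level pair $\{\hat{0},w_1\}$ must be removed at the end.
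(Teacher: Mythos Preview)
Your argument is correct and, once the recursion is unwound, produces exactly the same matching as the paper: toggle the first coordinate that lies in $\{\emptyset,w_l\}$, leaving as critical cells the vertex $b=(w_1,\emptyset,\ldots,\emptyset)$ together with all tuples avoiding every $\{\emptyset,w_i\}$. The paper reaches this directly, without induction, by defining a single poset map $g:P\to\{0<1\}^n$ (coordinate $i$ equals $0$ iff $\sigma_i\in\{\hat 0,x_i\}$) and invoking Lemma~\ref{lemma:composeAcyclic} once; your inductive step is the same map with the first coordinate peeled off. Your write-up is in fact more explicit on one point the paper leaves implicit, namely the verification that the complement of the full critical cells is downward closed and hence an honest subcomplex.
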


\begin{proof}

The face poset of $X_1 \ast X_2 \ast \cdots \ast X_n$ is isomorphic to
\[ P = ( \mathcal{F}(X_1) \cup \hat{0} ) \times ( \mathcal{F}(X_2) \cup \hat{0} ) \times \cdots \times ( \mathcal{F}(X_n) \cup \hat{0} ) \setminus \{\hat{0}\} \times  \{\hat{0}\} \times
 \cdots  \times \{\hat{0}\}. 
 \]
Pick some arbitrary vertices $x_i$ of $X_i$. Define a poset map
\[ g: P \rightarrow \{ 0 < 1 \}^n \]
coordinate wise by
\[ g( \ldots,\sigma_i,\ldots )_i = \left\{ \begin{array}{cl} 1 & \sigma \not\in \{ \hat{0} , x_i \} \\ 0 &   \sigma \in \{ \hat{0} , x_i \}    \end{array}  \right.\]
We don't care about creating a matching on the pre-image of $(1,1,1, \ldots, 1)$. For any other $s \in    \{ 0 < 1 \}^n$ let $l$ be the smallest number such that $s_l=0$. There is a complete acyclic matching on $g^{-1}(s)$ by matching $\hat{0}$ with $x_l$. Well, the matching is not complete on $g^{-1}(0,0,\ldots,0)$ since we try to match the nonexisting $(\hat{0},\hat{0},\ldots,\hat{0})$ with $(x_1,\hat{0},\ldots,\hat{0})$. So, $(x_1,\hat{0},\ldots,\hat{0})$ is unmatched and set $b=(x_1,\hat{0},\ldots,\hat{0})$. By Lemma~\ref{lemma:composeAcyclic} this matching is acyclic together, and together with $b$ its cells gives a collapsible subcomplex $Z$.

Any cell $\sigma $ of  $ X_1 \ast X_2 \ast \cdots \ast X_n $ not in $Z$ satisfy  $g( \ldots,\sigma_i,\ldots )_i = 1$ and $\sigma \not\in X_1 \ast \cdots \ast X_{i-1} \ast \emptyset \ast  X_{i+1} \ast \cdots X_n$  for all $i=1,2,\ldots,n$.
\end{proof}

This concept is used in Theorem~\ref{theorem:rep}, the Representation Theorem of Matroids.
\begin{definition}
A \emph{rank- and order-reversing} poset map $\ell$ between the two ranked posets $P$ and $Q$ of the same rank, is a map that satisfies $\textrm{rank}(p)+\textrm{rank}(\ell(p))=\textrm{rank}(P)$ for all $p$ in $P$.
\end{definition}
For any matroid $M$, we make use of rank- and order-reversing maps from $M$ to the boolean lattice on $\{ 1,2, \ldots, \textrm{rank}(M) \}$. There are many such maps, and the easy choice is
\[ \ell(p) = \{ 1, 2, \ldots, \textrm{rank}(M)-\textrm{rank}(p) \} \]
where $B$ is a boolean lattice on $\{1,2, \ldots, \textrm{rank}(M) \}.$ To get other representation of $l$, which are anyway homotopy equivalent, one can use other $\ell$. The following set of $\ell$-maps is implicit in \cite{A}: Let $\hat{0}=p_1<p_2<\cdots<p_{\textrm{rank}(M)+1}=\hat{1}$ be a maximal flag of $M$ and set
\[ \ell(p) = \{ 1\leq i \leq  \textrm{rank}(M) \mid p_i \geq p \}. \]

\begin{theorem}[The Representation Theorem of Matroids]\label{theorem:rep}
Let $M$ be a rank $r$ matroid, and
%
%
 let $\ell$ be a 
 rank- and order-reversing poset map from $M$ to a boolean lattice on \{1,2, \ldots, r\}. Let $X$ be a finite regular CW complex and define
 \[ D_p = \ast_{i=1}^r  \left\{ \begin{array}{cl} X & \textrm{if $i \in \ell(p)$} \\ \emptyset & \textrm{if $i \not\in \ell(p)$}    \end{array} \right. \]
 to get an $M$-diagram $\mathcal{D}$ with inclusion morphisms.
 
 Then
 \[ (Y,\mathbf{A})=  ( \textrm{hocolim }\mathcal{D}, \{ \textrm{hocolim }\mathcal{D}_{\geq a} \mid a \textrm{ is an atom of } M \} ) \]
 is an $X$--arrangement of $M$ and
 \[ \bigcup_{A \in \mathbf{A}} A \simeq \bigvee_{ p \in M\setminus \hat{0} } \left(    X^{ \ast (d-\textrm{rank}(p))}   \ast    \bigvee^{ | \mu(\hat{0},p) | }  S^{rank(p)-2}     \right)     \]
\end{theorem}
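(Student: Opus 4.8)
The plan is to prove the theorem in two stages, corresponding to its two assertions: first that $(Y,\mathbf{A})$ satisfies the five conditions of Definition~\ref{def:rep}, and second that the union of the arrangement complexes has the stated homotopy type. For the first stage I would use the explicit form of the diagram $\mathcal{D}$ together with the Projection Lemma (Lemma~\ref{lemma:projectionLemma}) to identify each relevant piece. The key observation is that for any flat $p \in M$ the subdiagram $\mathcal{D}_{\geq p}$ is again a diagram of join-type complexes, and since $\ell$ is rank- and order-reversing, the number of copies of $X$ appearing in $D_p$ equals $r - \textrm{rank}(p)$. I would first verify that $\textrm{hocolim }\mathcal{D} \simeq X^{\ast r}$ (condition 1) and that each $\textrm{hocolim }\mathcal{D}_{\geq a}$ for an atom $a$ is homotopy equivalent to $X^{\ast(r-1)}$ with the correct dimension (condition 2). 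Conditions 3 and 5 follow because intersections of the $\textrm{hocolim }\mathcal{D}_{\geq a}$ correspond to $\textrm{hocolim }\mathcal{D}_{\geq p}$ where $p$ is the join of the atoms, and the semimodularity of the geometric lattice controls how the rank drops by exactly one under a covering relation; the free $\Gamma$-action in condition 4 is inherited coordinatewise from the action on $X$ through the join construction.

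For the second stage, which is the substantive part, the plan is to realize $\bigcup_{A \in \mathbf{A}} A$ as a homotopy colimit over the poset $M \setminus \hat{0}$ and then apply Lemma~\ref{lemma:alexWedgeLemma}. The union of the arrangement complexes is exactly the homotopy colimit of the restriction of $\mathcal{D}$ to $M \setminus \hat{0}$, because removing $\hat{0}$ from $M$ removes the "ambient" complex $D_{\hat 0} = X^{\ast r}$ and leaves precisely the subcomplexes swept out by the atoms and their joins. I would set up the intersection diagram for this union and check that its hypotheses match those of Lemma~\ref{lemma:alexWedgeLemma}: for each flat $p$ the complex $D_p$ is a join $X_1 \ast \cdots \ast X_n$ of copies of $X$ and empty sets, and Lemma~\ref{lemma:joinUp} supplies exactly the acyclic matching $M_p$ with collapsible subcomplex $Z_p$ such that every cell of $D_p$ that maps down from a covering flat lies in $Z_p$. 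The coordinatewise compatibility between the matchings of $\mathcal{D}_{\geq p}$ and $\mathcal{D}_{\geq q}$ when $q$ covers $p$ is precisely what Lemma~\ref{lemma:joinUp} guarantees, since adjoining one more $X$-factor corresponds to one more active coordinate in the poset map $g$.

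Once Lemma~\ref{lemma:alexWedgeLemma} applies, I would obtain
\[ \bigcup_{A \in \mathbf{A}} A \simeq \bigvee_{p \in M \setminus \hat{0}} \left( \Delta((M\setminus\hat{0})_{<p}) \ast D_p \right), \]
and the remaining work is to identify each wedge summand with the claimed expression. Here $D_p \simeq X^{\ast(r - \textrm{rank}(p))}$ by the rank-reversing property of $\ell$, which is the easy factor. The order complex $\Delta((M\setminus\hat{0})_{<p})$ is the order complex of the open interval $(\hat{0},p)$ in the matroid lattice, and by the standard theory of geometric lattices this is homotopy equivalent to a wedge of $|\mu(\hat{0},p)|$ spheres of dimension $\textrm{rank}(p) - 2$, since the Möbius function of a geometric lattice alternates in sign and its absolute value counts the reduced homology of the open interval, which is concentrated in top degree. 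Joining this with $X^{\ast(r-\textrm{rank}(p))}$ and using that $\ast$ distributes over $\bigvee$ yields exactly the summand in the theorem. The main obstacle I anticipate is the careful bookkeeping in the first stage, verifying that the covering-relation compatibility of the Lemma~\ref{lemma:joinUp} matchings holds uniformly across all subdiagrams $\mathcal{D}_{\geq p}$ at once, rather than the final homotopy-type computation, which is essentially a citation of the homotopy type of intervals in geometric lattices.
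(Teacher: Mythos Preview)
Your proposal is correct and follows the same two-stage structure as the paper: first verify the five arrangement axioms via $\textrm{hocolim}\,\mathcal{D}_{\geq q} \simeq D_q$ (the paper cites the Wedge Lemma here rather than the Projection Lemma, exploiting that $M_{\geq q}$ has minimum $q$ so each $\Delta((M_{\geq q})_{<p})$ for $p>q$ is a cone), then realize the union as $\textrm{hocolim}\,\mathcal{D}_{>\hat 0}$ and apply Lemma~\ref{lemma:alexWedgeLemma} with the matchings supplied by Lemma~\ref{lemma:joinUp}. The final identification of $\Delta((\hat 0,p))$ with a wedge of $|\mu(\hat 0,p)|$ spheres of dimension $\textrm{rank}(p)-2$ is exactly the appeal to Folkman's theorem on intervals in geometric lattices that the paper makes at the end.
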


\begin{proof}

We need some more notation. As for any poset, $M_{\geq q}$ is the part of $M$ great or equal to $q$, and $M_{>q}$ is the part of $M$ greater than $q$. In the theorem statement a $M$-diagram of spaces $\mathcal{D}$ was defined. The smaller diagrams of spaces $\mathcal{D}_{\geq q}$ and $\mathcal{D}_{>q}$ use the same topological spaces and maps, but only the smaller posets $M_{\geq q}$, and $M_{>q}$. Even more general, if $P$ is a subposet of $M$, let $\mathcal{D}[P]$ be the diagram of spaces on $P$ defined by the induced subposet $P$ and $M$-diagram $\mathcal{D}$.

First we prove that it is an $X$--arrangement of $M$. The subcomplexes of $Y$ in $\mathbf{A}$, and there intersections, are on the form
\[  \textrm{hocolim }\mathcal{D}_{\geq q}\]
as shown later. From the wedge lemma (Lemma \ref{lemma:wedgeLemma}) we get that
\[  \textrm{hocolim }\mathcal{D}_{\geq q}   \simeq \bigvee_{p\in M_{\geq q}} \left( \Delta(M_{<p \cap M_{\geq q}}) \ast D_p \right) = D_q = X^{\ast (\textrm{rank}(M)-\textrm{rank}(q))} \]
since $ \Delta(M_{<p \cap M_{\geq q}}) $ is a cone if $p>q$ and the empty set if $p=q$.  Also note that the dimension of $\textrm{hocolim }\mathcal{D}_{\geq q}  $ is the same as for $X^{\ast (\textrm{rank}(M)-\textrm{rank}(q))}$ since the dimension of $\Delta(M_{<p \cap M_{\geq q}}) $ is $\textrm{rank}(p)-\textrm{rank}(q)-1$. Here is the verification of four out of five properties of $X$--arrangements, as stated in Definition \ref{def:rep}.
\begin{itemize}
\item[1.] The complex $Y$ is defined as $\textrm{hocolim }\mathcal{D}_{\geq \hat{0}}$, so $Y \simeq X^{\ast d}$ for $d=\textrm{rank}(M)$ and $\dim(Y)=\dim( X^{\ast d}).$
\item[2.]  Each complex $A$ in $\mathbf{A}$ is homotopy equivalent to $X^{\ast (d-1)}$ and $\dim(A)=\dim(X^{\ast (d-1)}$, since $A=\textrm{hocolim }\mathcal{D}_{\geq a}\simeq X^{\ast (\textrm{rank}(M)-\textrm{rank}(a))}=X^{\ast (d-1)}$ for some atom $a$ of $M$.
\item[3.] If $B$ is an intersection of $A_1,A_2,\ldots, A_k$ from $\mathbf{A}$, then
\[\begin{array}{rcl}
B & = &  \bigcap_{i=1}^k A_i \\
& = &   \bigcap_{i=1}^k \textrm{hocolim} \,\, \mathcal{D}_{\geq a_i} \\
& = &   \bigcap_{i=1}^k \textrm{hocolim} \,\, \mathcal{D}[M_{\geq a_i}] \\
& = &   \textrm{hocolim} \,\, \mathcal{D}\left[  \bigcap_{i=1}^k M_{\geq a_i}\right] \\
& = &   \textrm{hocolim} \,\, \mathcal{D}\left[ M_{\geq \bigvee_{i=1}^k a_i}\right] \\
& = &   \textrm{hocolim} \,\, \mathcal{D}_{\geq \bigvee_{i=1}^k a_i}\\
& \simeq & X^{\ast (d- \textrm{rank}( \bigvee_{i=1}^k a_i))} \\
\end{array}
\]
for some atoms $a_1, a_2, \ldots a_k$ of $M$, and $\dim(B)=\dim(X^ {\ast (d- \textrm{rank}( \bigvee_{i=1}^k a_i))} )$.
\item[5.] Now assume that $B \simeq X^{\ast e}$ is an intersection of complexes in $\mathbf{A}$, the complex $A$ is in $\mathbf{A}$, and 
%
%
%
%
%
%
$A \not \supseteq B$
We should show that $A\cap B \simeq X^{\ast (e-1)}$.

As shown in point 3 above, we know that $B$ is on the form
\[ B = \textrm{hocolim} \,\,  \mathcal{D}_{\geq \bigvee_{i=1}^k a_i} \simeq X^{\ast (d- \textrm{rank}( \bigvee_{i=1}^k a_i))} \]
for some atoms $a_1,a_2,\ldots, a_k$ of $M$. By definition
\[ A = \textrm{hocolim} \,\,   \mathcal{D}_{\geq a_0} \]
for some atom $a_0$ of $M$. From the assumption that 
%
%
%
%
%
%
$A \not \supseteq B$ we get that
\[ a_0 \not \leq  \bigvee_{i=1}^k a_i. \]
Now we reach the point of this proof where it is necessary to assume that $M$ is a matroid. Until here atomic lattice would have sufficed. Since $M$ is a matroid,
\[ \textrm{rank} \left(  \bigvee_{i=0}^k a_i \right) = 1 +  \textrm{rank} \left(  \bigvee_{i=1}^k a_i \right). \]
Using the description from point 3 again, we get that
\[\begin{array}{rcl}
A \cap B & = & \textrm{hocolim} \,\, \mathcal{D}_{\geq \bigvee_{i=0}^k a_i}\\
& \simeq &  X^{\ast (d- \textrm{rank}( \bigvee_{i=0}^k a_i))} \\
& = & X^{\ast (d- \textrm{rank}( \bigvee_{i=1}^k a_i)-1)} \\
& = & X^{\ast(e-1)}. \\
\end{array}\]
\end{itemize}
Point 4 in the definition of $X$--arrangement is not verified yet. It states that if $\Gamma$ is a free group action on $X$, then it induces a free $\Gamma$--action on $Y$ and every intersection of complexes in $\mathbf{A}$. If $p_1,p_2,\ldots, p_k$ are point of $X$ and $\gamma \in \Gamma$ then we act on $X ^{\ast k}$ by
\[ p_1 \ast p_2 \ast \cdots \ast p_k \mapsto \gamma(p_1) \ast \gamma(p_2)\ast \cdots \ast \gamma(p_k). \]
This is a free group action, and since all morphisms of the diagram $\mathcal{D}$ are inclusion morphisms, $\Gamma$ induces a free group action on $\textrm{hocolim}\,\, \mathcal{D}[P]$ for any subposet $P$ of $M$. Recall that any intersection of complexes in $\mathbf{A}$ is on the form $\textrm{hocolim}\,\, \mathcal{D}[P]$ by point 3 above. 

We have verified that $(Y,\mathbf{A})$ is an $X$--arrangement, and now continue with proving that
 \[ \bigcup_{A \in \mathbf{A}} A \simeq \bigvee_{ p \in M\setminus \hat{0} } \left(    X^{ \ast (d-\textrm{rank}(p))}   \ast    \bigvee^{ | \mu(\hat{0},p) | }  S^{rank(p)-2}     \right).  \]
By the definition of the homotopy colimit,
\[ \bigcup_{A \in \mathbf{A}} A = \bigcup_{a \textrm{ atom of }M}  \textrm{hocolim}\,\, \mathcal{D}_{\geq a} = \textrm{hocolim}\,\, \mathcal{D}_{>\hat{0}}.\]
To every element $p$ of $M_{>0}$ we have associated a space isomorphic to$X^{\ast (\textrm{rank}(M)-\textrm{rank}(p))}$. If $p$ covers $q$, then by definition $D_p \ast X$ is isomorphic to $D_q$ and the morphism in the diagram from $D_p$ into $D_q$ is an inclusion morphism. Using Lemma \ref{lemma:joinUp} we can deformation retract all spaces in the diagram to get null-homotopic maps, and then conclude by Lemma \ref{lemma:alexWedgeLemma} that
\[  \textrm{hocolim}\,\, \mathcal{D}_{>\hat{0}} \simeq \bigvee_{ p \in M_{>\hat{0}}} \left( \Delta(M_{>\hat{0}} \cap M_{<p }) \ast D_p \right). \]
According to Theorem 4.1 of \cite{Fo}, which is elaborated on in section 2 of \cite{B2}, the homotopy type of an interval in $M$ is
\[ \Delta( M_{>a} \cap M_{<b}) \simeq \bigvee^{ | \mu(a,b) | }  S^{rank(b)-rank(a)-2}.  \]
since it is a lattice. Here $\mu$ is the M\"obius function. Hence
\[ \Delta( M_{>\hat{0} } \cap M_{<b}   ) \simeq  \bigvee^{ | \mu(\hat{0},p) | }  S^{rank(p)-2}.  \]
By construction
\[D_p = X^{ \ast (d-\textrm{rank}(p))}, \]
and finally
\[ \bigcup_{A \in \mathbf{A}} A \simeq \bigvee_{ p \in M\setminus \hat{0} } \left(    X^{ \ast (d-\textrm{rank}(p))}   \ast    \bigvee^{ | \mu(\hat{0},p) | }  S^{rank(p)-2}     \right).  \]
\end{proof}

Given a $X$-representation $(Y,\mathbf{A})$ of a matroid $M$, how do we recover it? If we cover $\cup_{A\in \mathbf{A}} A$ by its subcomplexes in $\mathbf{A}$, then from point 5 in the first part of the proof of the Representation Theorem of Matroids~\ref{theorem:rep}, one see that the intersection diagram is a diagram on $M_{>\hat{0}}$.

\section{An explicit example: Representing the Fano matroid}
In this section we give an explicit example of how to use the Topological Representation Theorem \ref{theorem:rep}. The Fano matroid $M$ found by Whitney \cite{W} is a minimal non-oriented matroid \cite{BLVSWZ}.  In Figure~\ref{fig:fano} it is drawn with its circuits as curves. Let $X$ be two disjoint points to get a spherical representation of it. The map $\ell:M \rightarrow \{1,2,3,4\}$ is described in the diagram of spaces (Figure~\ref{fig:fanoLattice}) like this: If the $i$:th ball below a certain element of $p\in M$ is filled, then $i$ is in $\ell(p)$. For example,
\[ \ell(\{4,5,6,7\})=\{4\}, \,\,\,    \ell(\{4,7\})=\{1,4\},\textrm{and } \ell(\{7\})=\{1,2,4\} \]
In Figures~\ref{fig:fanoCyl1}-\ref{fig:fanoCyl4} some of the parts used to glue together the topological representation of $M$ is drawn.
\begin{figure}
\centering
\includegraphics{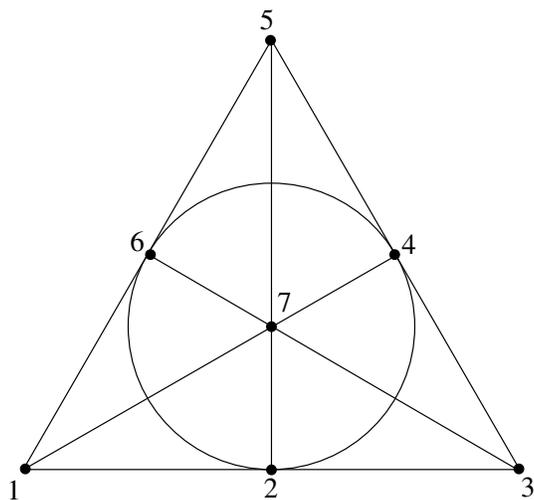}
\caption{The Fano matroid}\label{fig:fano}
\end{figure}
\begin{figure}
\centering
\includegraphics[width=14cm]{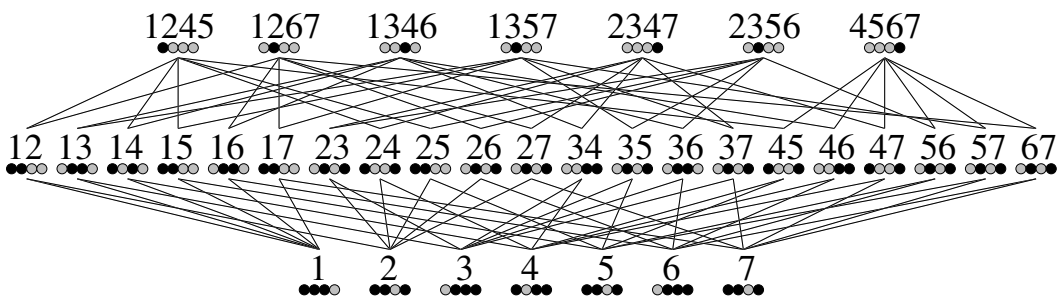}
\caption{A diagram of spaces on the Fano matroid}\label{fig:fanoLattice}
\end{figure}
\begin{figure}
\centering
\includegraphics{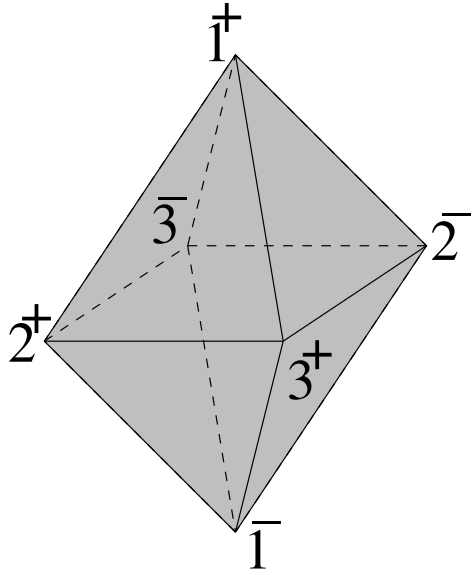}
\caption{The space $\mathcal{D}_1$.}\label{fig:fanoCyl1}
\end{figure}
\begin{figure}
\centering
\includegraphics{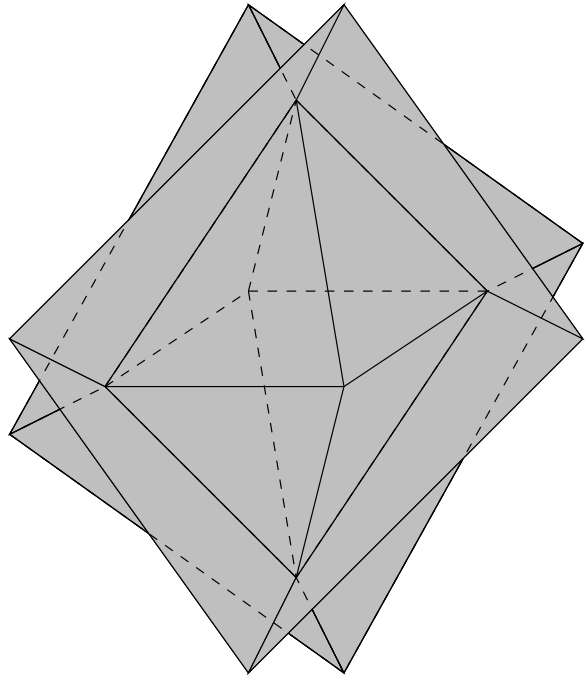}
\caption{The mapping cylinders of $\mathcal{D}_{12} \rightarrow \mathcal{D}_1$ and  $\mathcal{D}_{15} \rightarrow \mathcal{D}_1$ .}\label{fig:fanoCyl2}
\end{figure}
\begin{figure}
\centering
\includegraphics{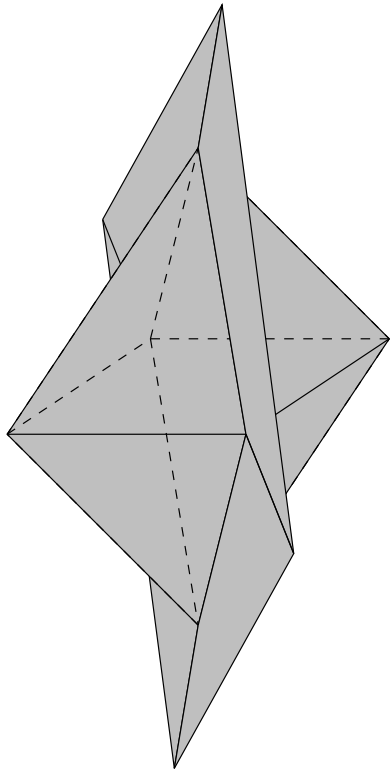}
\caption{The mapping cylinder of $\mathcal{D}_{14} \rightarrow \mathcal{D}_1$.}\label{fig:fanoCyl3}
\end{figure}
\begin{figure}
\centering
\includegraphics{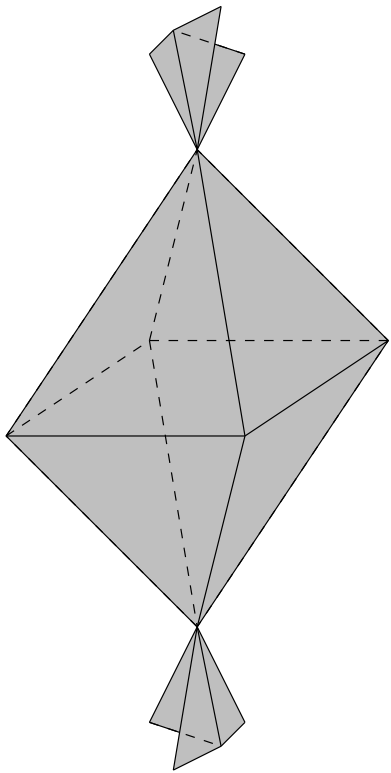}
\caption{The mapping cylinders of $\mathcal{D}_{1245} \rightarrow \mathcal{D}_{12} \rightarrow \mathcal{D}_1$,  $\mathcal{D}_{1245} \rightarrow \mathcal{D}_{14} \rightarrow \mathcal{D}_1$ and $\mathcal{D}_{1245} \rightarrow \mathcal{D}_{15} \rightarrow \mathcal{D}_1$.}\label{fig:fanoCyl4}
\end{figure}

\newpage

\end{document}